\newtheorem{theorem}{Theorem}[section]
\newtheorem{lemma}[theorem]{Lemma}
\newtheorem{proposition}[theorem]{Proposition}
\theoremstyle{definition}
\newtheorem{definition}[theorem]{Definition}
\theoremstyle{remark}
\newtheorem{example}{Example}
\title{Random Walk of a Cat in a Building}
\author{Hery Randriamaro
\thanks{This research was funded by my mother \\
Lot II B 32 bis Faravohitra, 101 Antananarivo, Madagascar \\
e-mail: \texttt{hery.randriamaro@gmail.com}}}
\begin{document}

\maketitle

\begin{abstract}
\noindent One usually thinks of a cat moving from one room to another in an apartment as random walk model. Imagine now that it also has the possibility to go from one apartment to another by crossing some corridors. That yields a new probabilistic model for which each corridor connects the entrance rooms of several apartments. This article shows that the determinants of the stochastic and the exponential distance matrices of that model have a nice factorization. Two examples involving indirectly acyclic digraphs and hyperplane arrangements are provided.   

\bigskip 

\noindent \textsl{Keywords}: Random Walk, Stochastic Matrix, Distance Function, Determinant

\smallskip

\noindent \textsl{MSC Number}: 05B20, 15A15, 60C05, 60J10
\end{abstract}

\section{Introduction}

\noindent A random walk is a stochastic model describing the probability of random steps on some mathematical space. We use a connected digraph $\mathrm{G} := (\mathrm{V},\mathrm{E})$, where $\mathrm{V}$ is the set of vertices and $\mathrm{E}$ of edges, to describe our model. For every pair $(A,B)$ of vertices, there is a vertex sequence $(A = A_1, A_2, \dots, A_k = B)$ from $A$ to $B$ such that $(A_i, A_{i+1}) \in E$ for $i \in [k-1]$. Denote the set formed by the vertex sequence from $A$ to $B$ by $\mathscr{S}(A,B)$. The length between $A$ and $B$ is $\mathrm{l}(A,B) := \min \big\{k \in \mathbb{N}\ \big|\ (A = A_1, A_2, \dots, A_k = B) \in \mathscr{S}(A,B)\big\}$ with $\mathrm{l}(A,A) := 0$. Moreover, the set formed by the minimal sequences from $A$ to $B$ is
$$\mathscr{M}(A,B) := \big\{(A = A_1, A_2, \dots, A_k = B) \in \mathscr{S}(A,B)\ \big|\ k = \mathrm{l}(A,B)\big\}.$$

\noindent For simplicity, this article considers the model of a moving cat located in a certain room at each step. To this model can naturally be extrapolated various models. The cat goes from room $A$ to room $B$ with the probability $\mathtt{p}(A,B)$. The probabilistic graph of that model is a connected digraph $\mathrm{G} := (\mathrm{V},\mathrm{E},\mathtt{p})$ formed by the room set $\mathrm{V}$, the set $\mathrm{E} \subseteq \mathrm{V}^2$ of $2$-adjacent rooms containing also $\big\{(A_i,A_i)\big\}_{A_i \in \mathrm{V}}$, and the probability $\mathtt{p}: \mathrm{V}^2 \rightarrow [0,1]$ labeling each pair $(A_i,A_j) \in \mathrm{E}$ by $\mathtt{p}(A_i,A_j)$ such that, for $A,B \in \mathrm{V}$ with $A \neq B$,
\begin{itemize}
\item $\displaystyle \sum_{A' \in \mathrm{V}} \mathtt{p}(A,A') = 1$,
\item if $(A_1, A_2, \dots, A_k), (A_1', A_2', \dots, A_k') \in \mathscr{M}(A,B)$, then as multisets 
$$\big\{\mathtt{p}(A_i, A_{i+1})\big\}_{i \in [k-1]} = \big\{\mathtt{p}(A_i', A_{i+1}')\big\}_{i \in [k-1]},$$
\item if $(A_1, A_2, \dots, A_k) \in \mathscr{M}(A,B)$, then $\displaystyle \mathrm{p}(A,B) = \prod_{i \in [k-1]} \mathrm{p}(A_i,A_{i+1})$.
\end{itemize}

\noindent Let us call such a digraph ``A Probabilistic Graph of a Walking Cat".

\begin{definition} \label{DeCo}
Let $\mathrm{G} = (\mathrm{V},\mathrm{E},\mathtt{p})$ be a probabilistic graph of a walking cat. We say that a nonempty set $\mathrm{U} \subseteq \mathrm{V}$ is connected by a corridor if $\mathrm{V}$ can be partitioned into $\#\mathrm{U}$ sets $\mathrm{V}_1, \dots, \mathrm{V}_{\#\mathrm{U}}$ such that, for $i,j \in [\#\mathrm{U}]$,
\begin{itemize}
\item $\mathrm{V}_i \cap \mathrm{U}$ contains exactly one element which we denote $C_i$, and $(C_i,C_j) \in \mathrm{E}$,
\item if $A,B \in \mathrm{V}_i$, $A \neq B$, and $(A = A_1, A_2, \dots, A_k = B) \in \mathscr{M}(A,B)$, then $A_1, \dots, A_k \in \mathrm{V}_i$,
\item if $i \neq j$, $(A,B) \in \mathrm{V}_i \times \mathrm{V}_j$, $(A_1, A_2, \dots, A_k) \in \mathscr{M}(A,C_i)$, $(B_1, B_2, \dots, B_l) \in \mathscr{M}(C_j,B)$, then $(A_1, A_2, \dots, A_k, B_1, B_2, \dots, B_l) \in \mathscr{M}(A,B)$.
\end{itemize}
\end{definition}

\begin{example}
In the probabilistic graph of Figure~\ref{ExPr}, the set $\{2,3,4\}$ is connected by a corridor, and partitions the room set into $\{1,2\}$, $\{3,5\}$, $\{4,6\}$.	
	
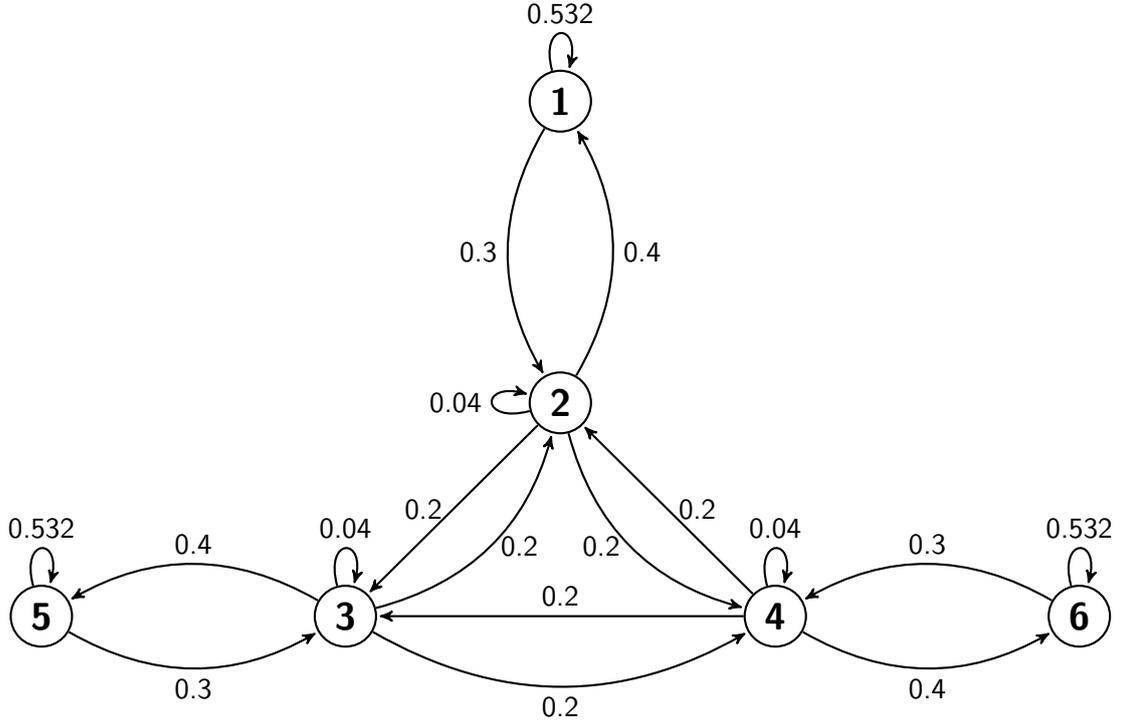
\begin{figure}[h]
	\centering
	\begin{tikzpicture}[->,>=stealth',shorten >=1pt,auto,node distance=4cm,
	thick,main node/.style={circle,draw,font=\sffamily\Large\bfseries}]
	\node[main node] (1) {1};
	\node[main node] (2) [below of=1] {2};
	\node[main node] (3) [below left of=2] {3};
	\node[main node] (4) [below right of=2] {4};
	\node[main node] (5) [left of=3] {5};
	\node[main node] (6) [right of=4] {6};
	
	\path[every node/.style={font=\sffamily}]
	(1) edge [bend right] node[left] {0.3} (2)
	edge [loop above] node {0.532} (1)
	(2) edge [bend right] node [right] {0.4} (1)
	edge [bend right] node [left] {0.2} (4)
	edge [loop left] node {0.04} (2)
	edge node[left] {0.2} (3)
	(3) edge [bend right] node [right] {0.2} (2)
	edge [loop above] node {0.04} (3)
	edge [bend right] node [below] {0.2} (4)
	edge [bend right] node [above] {0.4} (5)
	(4) edge node [right] {0.2} (2)
	edge [loop above] node {0.04} (4)
	edge node [above] {0.2} (3)
	edge [bend right] node[below] {0.4} (6)
	(5) edge [bend right] node [below] {0.3} (3)
	edge [loop above] node {0.532} (5)
	(6) edge [bend right] node [above] {0.3} (4)
	edge [loop above] node {0.532} (6);
	\end{tikzpicture}
	\caption{A Probabilistic Graph of a Walking Cat}
	\label{ExPr}
\end{figure}	
\end{example}

\noindent Before presenting the results, we need the following lemma that we prove in Section~\ref{SePr}.

\begin{lemma} \label{LeCo}
Let $\mathrm{G} = (\mathrm{V},\mathrm{E},\mathtt{p})$ be a probabilistic graph of a walking cat, and $\mathrm{U}_1, \dots, \mathrm{U}_r$ nonempty subsets of $\mathrm{V}$ with $\mathrm{U}_i \cap \mathrm{U}_j = \emptyset$ if $i,j \in [r]$ and $i \neq j$. Suppose that the sets $\mathrm{U}_1, \dots, \mathrm{U}_r$ are connected by a corridor. Then, $\mathrm{V}$ can be partitioned into $\displaystyle s = \sum_{i \in [r]} \#\mathrm{U}_i -r+1$ sets $\mathrm{V}_1, \dots, \mathrm{V}_s$ such that, for $l \in [s]$,
\begin{itemize}
\item $\exists i \in [r]:\, \mathrm{U}_i \cap \mathrm{V}_l \neq \emptyset$ \: and \: $\forall i \in [r]:\, \# \mathrm{U}_i \cap \mathrm{V}_l \leq 1$,
\item if $A,B \in \mathrm{V}_l$, $A \neq B$, and $(A = A_1, A_2, \dots, A_k = B) \in \mathscr{M}(A,B)$, then $A_1, \dots, A_k \in \mathrm{V}_l$.
\end{itemize}
\end{lemma}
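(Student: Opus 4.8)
The plan is to realise the desired partition as the common refinement of the $r$ individual corridor partitions, and to prove by induction on $r$ that this refinement has exactly $s$ blocks, each meeting every $\mathrm{U}_i$ in at most one point and at least one $\mathrm{U}_i$ in exactly one point. For each $i \in [r]$ write $n_i := \#\mathrm{U}_i$ and let $\mathrm{V} = W^{(i)}_1 \sqcup \dots \sqcup W^{(i)}_{n_i}$ be the partition witnessing that $\mathrm{U}_i$ is connected by a corridor, with entrance $C^{(i)}_j$ the unique element of $\mathrm{U}_i \cap W^{(i)}_j$. Let $\pi_i \colon \mathrm{V} \to [n_i]$ send $A$ to the index $j$ with $A \in W^{(i)}_j$, and declare $A \equiv B$ when $\pi_i(A) = \pi_i(B)$ for all $i$; the blocks of $\equiv$ are the candidate sets $\mathrm{V}_1, \dots, \mathrm{V}_s$.

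First I would record that the blocks of $\equiv$ already satisfy the second bullet. If $A \equiv B$ and $(A = A_1, \dots, A_k = B) \in \mathscr{M}(A,B)$, then for each $i$ the points $A$ and $B$ lie in a common apartment $W^{(i)}_{j}$, so the second bullet of Definition~\ref{DeCo} applied to $\mathrm{U}_i$ keeps $A_1, \dots, A_k$ inside $W^{(i)}_{j}$; intersecting over all $i$ confines the whole sequence to the block of $A$. The identical argument applies to the refinement of any sub-collection of the corridors, which is exactly what the induction will invoke.

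The structural heart is the mutual-containment claim: for $i \neq i'$ the entire set $\mathrm{U}_{i'}$ lies in a single apartment of corridor $i$. To prove it, suppose two entrances $C^{(i')}_a, C^{(i')}_b$ of $\mathrm{U}_{i'}$ lay in distinct apartments $W^{(i)}_p, W^{(i)}_q$ with $p \neq q$. The first bullet of Definition~\ref{DeCo} makes the entrances of a corridor pairwise adjacent, so the sequence $(C^{(i')}_a, C^{(i')}_b)$ is minimal and $\mathrm{l}(C^{(i')}_a, C^{(i')}_b) = 2$. But the third bullet for corridor $i$ forces a minimal sequence from $C^{(i')}_a$ to $C^{(i')}_b$ to factor through $C^{(i)}_p$ and then $C^{(i)}_q$; since $C^{(i')}_a \neq C^{(i)}_p$ (these entrances belong to the disjoint sets $\mathrm{U}_{i'}$ and $\mathrm{U}_i$), such a sequence has at least three terms, contradicting $\mathrm{l}(C^{(i')}_a, C^{(i')}_b) = 2$. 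This forces all entrances of $\mathrm{U}_{i'}$ into one apartment of corridor $i$.

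Finally I would run the induction. The case $r = 1$ is the defining partition of $\mathrm{U}_1$. Assuming the statement for $\mathrm{U}_1, \dots, \mathrm{U}_{r-1}$, let $Q_1, \dots, Q_{s'}$ be the resulting blocks with $s' = \sum_{i<r} n_i - (r-1) + 1$, obeying both bullets. Applying the mutual-containment claim with the roles reversed, for every $i < r$ all $n_r$ entrances of $\mathrm{U}_r$ share one value of $\pi_i$, hence they all lie in one block $Q_{l_0}$; as these entrances occupy $n_r$ distinct apartments of corridor $r$, the map $\pi_r$ cuts $Q_{l_0}$ into exactly $n_r$ nonempty pieces. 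For any other block $Q_l$ the value of $\pi_r$ is constant, for otherwise a minimal sequence between two of its points would, by the third bullet for $\mathrm{U}_r$, pass through an entrance of $\mathrm{U}_r$, yet every such entrance lies in $Q_{l_0} \neq Q_l$, contradicting the path-confinement of the second paragraph. Thus refining by $\pi_r$ replaces $Q_{l_0}$ by $n_r$ blocks and preserves the other $s'-1$, giving $s' + n_r - 1 = \sum_{i \le r} n_i - r + 1 = s$ blocks. The second bullet is inherited from the remark above; each new piece contains some $C^{(r)}_j$ and each surviving block retains its old entrance, supplying the ``at least one'' half of the first bullet, while ``at most one'' holds because every block sits inside a single apartment of each corridor. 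The step I expect to be most delicate is the mutual-containment claim together with its consequence that precisely one old block is subdivided at each stage; the remaining verification of the two bullets is routine bookkeeping.
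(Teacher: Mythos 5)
Your proposal is correct and follows essentially the same route as the paper: an induction that adds one corridor at a time, shows the new corridor $\mathrm{U}_r$ lies inside a single block of the partition built so far, and subdivides only that block into $\#\mathrm{U}_r$ pieces, yielding the count $\sum_i \#\mathrm{U}_i - r + 1$. Your write-up is in fact more careful than the paper's at the key point --- you prove the containment of $\mathrm{U}_r$ in one block via the minimal-sequence-length contradiction, whereas the paper merely asserts the corresponding edge characterization $\big\{(A,B)\ \big|\ A \in \mathrm{V}_i,\, B \in \mathrm{V}_j\big\} = \big\{(C_i,C_j)\big\}$ without justification.
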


\noindent The left stochastic matrix associated to a probabilistic graph of a walking cat $\mathrm{G} = (\mathrm{V},\mathrm{E},\mathtt{p})$ is $\mathsf{S}_{\mathrm{G}} := \big(\mathtt{p}(B,A)\big)_{A,B \in \mathrm{V}}$. Moreover, let $\mathrm{E}_i := \big\{(A,B) \in E\ \big|\ A,B \in \mathrm{V}_i\big\}$ for $\mathrm{V}_i$ defined in Lemma~\ref{LeCo}. It is clear that the induced subgraph $\mathrm{G}_i = (\mathrm{V}_i,\mathrm{E}_i)$ of $\mathrm{G}$ is connected. The matrix associated to that subgraph is $\mathsf{S}_{\mathrm{G}_i} := \big(\mathtt{p}(B,A)\big)_{A,B \in \mathrm{V}_i}$. We can now state the results.

\begin{theorem} \label{Th1}
Let $\mathrm{G} = (\mathrm{V},\mathrm{E},\mathtt{p})$ be a probabilistic graph of a walking cat, and $\mathrm{U}_1, \dots, \mathrm{U}_r$ nonempty subsets of $\mathrm{V}$ connected by a corridor and partitioning $\mathrm{V}$ into $s$ sets $\mathrm{V}_1, \dots, \mathrm{V}_s$ as in Lemma~\ref{LeCo}. Besides, assume that for every $i \in [r]$, there is a real number $c_i \in [0,1]$ such that, if $A,B \in \mathrm{U}_i$, we have $\mathtt{p}(A,B) = c_i$. If $\mathrm{G}_k = (\mathrm{V}_k,\mathrm{E}_k)$ are the induced subgraphs, then
$$\det \mathsf{S}_{\mathrm{G}} = \prod_{i \in [r]} \bigg(1 + \sum_{\substack{\mathrm{K} \subseteq \mathrm{U}_i \\ \#\mathrm{K} \geq 2}} (-1)^{\#\mathrm{K}-1}(\#\mathrm{K}-1) \prod_{A \in \mathrm{K}} c_i\, \mathtt{p}(A,A)\bigg) \prod_{k \in [s]} \det \mathsf{S}_{\mathrm{G}_k}.$$
\end{theorem}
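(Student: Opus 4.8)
\emph{Approach.} The plan is to turn the partition of Lemma~\ref{LeCo} into a block decomposition of the stochastic matrix and to isolate the corridor crossings as a low-rank correction. Order the rooms so that each block $\mathrm{V}_k$ occupies a consecutive range of indices and write $\mathsf{S}_{\mathrm{G}} = \mathsf{D} + \mathsf{C}$, where $\mathsf{D} := \bigoplus_{k \in [s]} \mathsf{S}_{\mathrm{G}_k}$ is block diagonal and $\mathsf{C}$ gathers the cross-block entries. By the third item of Definition~\ref{DeCo}, a minimal path joining two distinct blocks is routed through their corridor points, so every nonzero entry of $\mathsf{C}$ lies in a row and a column indexed by a corridor point, and for a fixed $\mathrm{U}_i$ the corresponding block of $\mathsf{C}$ is $c_i(J - \mathrm{Id})$. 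Factoring through the inclusion $\mathsf{E}$ of the corridor points, write $\mathsf{C} = \mathsf{E}\,\mathsf{B}\,\mathsf{E}^{\top}$ with $\mathsf{B} := \bigoplus_{i \in [r]} c_i(J - \mathrm{Id})$. Assuming $\mathsf{D}$ invertible (the general case follows by specialisation, the determinant being polynomial in the entries), the Weinstein--Aronszajn identity gives
$$\det \mathsf{S}_{\mathrm{G}} = \det \mathsf{D} \cdot \det\!\big(\mathrm{Id} + \mathsf{B}\,\mathsf{E}^{\top}\mathsf{D}^{-1}\mathsf{E}\big) = \prod_{k \in [s]} \det \mathsf{S}_{\mathrm{G}_k} \cdot \det\!\big(\mathrm{Id} + \mathsf{B}\,\mathsf{E}^{\top}\mathsf{D}^{-1}\mathsf{E}\big),$$
which already accounts for the last product in the statement and reduces everything to the corridor determinant on the right.

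\emph{The combinatorial factor.} Since $\mathsf{D}$ is block diagonal, its compression $\mathsf{E}^{\top}\mathsf{D}^{-1}\mathsf{E}$ to the corridor points is the diagonal matrix $X$ with $X_{A,A} = \big(\mathsf{S}_{\mathrm{G}_{k(A)}}^{-1}\big)_{A,A}$, where $k(A)$ is the block containing $A$. Expanding over principal minors,
$$\det\!\big(\mathrm{Id} + \mathsf{B}X\big) = \sum_{\mathrm{K}} \det\!\big((\mathsf{B}X)[\mathrm{K}]\big),$$
the sum running over all subsets $\mathrm{K}$ of corridor points. Because $\mathsf{B}$ is block diagonal along the corridors, $(\mathsf{B}X)[\mathrm{K}]$ is block diagonal whenever $\mathrm{K}$ meets more than one $\mathrm{U}_i$; hence the sum factors as a product over $i \in [r]$, and within corridor $i$ only the subsets $\mathrm{K} \subseteq \mathrm{U}_i$ occur. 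For such a $\mathrm{K}$ of size $t$ one has $(\mathsf{B}X)[\mathrm{K}] = c_i(J_t - \mathrm{Id}_t)\,\mathrm{diag}\big(X_{A,A}\big)$, whose determinant is $(-1)^{t-1}(t-1)\prod_{A \in \mathrm{K}} c_i X_{A,A}$ by the eigenvalue computation $\det(J_t - \mathrm{Id}_t) = (-1)^{t-1}(t-1)$. Singletons contribute $0$ and the empty set $1$, so the $i$-th factor is exactly $1 + \sum_{\#\mathrm{K} \geq 2}(-1)^{\#\mathrm{K}-1}(\#\mathrm{K}-1)\prod_{A \in \mathrm{K}} c_i X_{A,A}$, matching the bracket in the theorem provided $X_{A,A} = \mathtt{p}(A,A)$.

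\emph{The crux.} The whole argument therefore rests on the single identity $\big(\mathsf{S}_{\mathrm{G}_{k(A)}}^{-1}\big)_{A,A} = \mathtt{p}(A,A)$ for each corridor point $A$; this is the step I expect to be hard, and it is the only place where the internal geometry of a block enters. I would try to prove it by a cofactor computation inside $\mathsf{S}_{\mathrm{G}_{k(A)}}$, using that $A$ is the unique corridor point of its block and that, by the second item of Lemma~\ref{LeCo}, the block is closed under minimal paths, so that the $(A,A)$ cofactor and the block determinant should be tied together precisely by the self-loop weight $\mathtt{p}(A,A)$. If, as the small cases suggest, this identity needs a further hypothesis on the blocks (beyond $\mathtt{p}(A,B) = c_i$ on $\mathrm{U}_i$), that hypothesis is exactly what must be added for the stated factorisation to hold; establishing it, or locating the minimal extra assumption that forces it, is the heart of the proof.
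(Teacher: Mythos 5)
Your decomposition starts from a misreading of the matrix. In the paper, $\mathtt{p}$ is defined on \emph{all} of $\mathrm{V}^2$: for non-adjacent rooms it is the product of the edge probabilities along a minimal path, so $\mathsf{S}_{\mathrm{G}}$ is a full matrix and the cross-block entries are not supported on corridor rows and columns. For $A \in \mathrm{V}_h$, $B \in \mathrm{V}_k$ with $h \neq k$ and corridor points $C_h, C_k$, the third item of Definition~\ref{DeCo} gives $\mathtt{p}(B,A) = \mathtt{p}(B,C_k)\, c\, \mathtt{p}(C_h,A)$, so the $(h,k)$ block of your $\mathsf{C}$ is the rank-one matrix $c\,\big(\mathsf{S}_{\mathrm{G}_h}e_{C_h}\big)\big(e_{C_k}^{\top}\mathsf{S}_{\mathrm{G}_k}\big)$, an outer product of a column of $\mathsf{S}_{\mathrm{G}_h}$ with a row of $\mathsf{S}_{\mathrm{G}_k}$ --- not $c_i(J-\mathrm{Id})$ pinned at the corridor points. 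This error is precisely what manufactures your ``crux'': the correct factorization is $\mathsf{C} = (\mathsf{D}\mathsf{E})\,\mathsf{B}\,(\mathsf{E}^{\top}\mathsf{D})$ rather than $\mathsf{E}\,\mathsf{B}\,\mathsf{E}^{\top}$, so Weinstein--Aronszajn yields $\det\big(\mathrm{Id} + \mathsf{B}\,\mathsf{E}^{\top}\mathsf{D}\,\mathsf{D}^{-1}\,\mathsf{D}\mathsf{E}\big) = \det\big(\mathrm{Id} + \mathsf{B}\,\mathsf{E}^{\top}\mathsf{D}\mathsf{E}\big)$: the inverse cancels, $\mathsf{E}^{\top}\mathsf{D}\mathsf{E}$ is the diagonal matrix of self-loop probabilities $\mathtt{p}(A,A)$, and your principal-minor expansion then gives the stated bracket. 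The identity $\big(\mathsf{S}_{\mathrm{G}_{k(A)}}^{-1}\big)_{A,A} = \mathtt{p}(A,A)$ that you rightly suspected to be unprovable is false in general (already for $2\times 2$ blocks) and is never needed; no extra hypothesis on the blocks is required either. Since you leave that step unresolved, the proof as written is incomplete, and the needed repair is structural, not a missing lemma.

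There is a second gap even after that repair: your one-shot treatment of all $r$ corridors simultaneously. When $r > 1$, the partition of Lemma~\ref{LeCo} is built recursively ($\mathrm{U}_2$ sits inside a single block of the partition induced by $\mathrm{U}_1$, and so on), and an entry of $\mathsf{S}_{\mathrm{G}}$ joining two blocks separated by several corridors is a product involving several of the $c_i$ and intermediate within-block factors. Such entries are not captured by any expression of the form $\mathsf{E}\,\big(\bigoplus_i c_i(J-\mathrm{Id})\big)\,\mathsf{E}^{\top}$ (nor by its corrected variant applied once), so the claimed factorization of the corridor determinant over $i \in [r]$ does not follow from block-diagonality of $\mathsf{B}$. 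The paper's proof handles exactly this by iteration: it identifies $\mathsf{S}_{\mathrm{G}}$ with the matrix $M_{c_1}$ of Definition~\ref{DeMat} built from the blocks of the first corridor, applies Theorem~\ref{ThMat} (whose proof is, in essence, your corrected computation: factor out $\bigoplus_k \mathsf{A}_k$ and evaluate the remaining determinant by a Schur complement), and then recurses into the block containing the next corridor. So your approach, once both defects are fixed, collapses into the paper's argument rather than providing an alternative to it.
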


\noindent Let $x_1, \dots, x_n$ be variables, and $\mathbb{M}_n$ the set formed by the monomials of $\mathbb{R}[x_1, \dots, x_n]$. Say that the function $\mathtt{d}: \mathrm{V}^2 \rightarrow \mathbb{M}_n$ is an exponential distance on a probabilistic graph of a walking cat $\mathrm{G} = (\mathrm{V},\mathrm{E},\mathtt{p})$ if, for $A,B \in \mathrm{V}$ with $A \neq B$,
\begin{itemize}
\item $\mathtt{d}(A, A)=1$,
\item if $(A_1, A_2, \dots, A_k), (A_1', A_2', \dots, A_k') \in \mathscr{M}(A,B)$, then as multisets 
$$\big\{\mathtt{d}(A_i, A_{i+1})\big\}_{i \in [k-1]} = \big\{\mathtt{d}(A_i', A_{i+1}')\big\}_{i \in [k-1]},$$
\item if $(A_1, A_2, \dots, A_k) \in \mathscr{M}(A,B)$, then $\displaystyle \mathrm{d}(A,B) = \prod_{i \in [k-1]} \mathrm{d}(A_i,A_{i+1})$.
\end{itemize}

\noindent The digraph $\bar{\mathrm{G}} = (\mathrm{V},\mathrm{E},\mathtt{d})$ is dual to the probability graph of a walking cat $\mathrm{G} = (\mathrm{V},\mathrm{E},\mathtt{p})$ if
$$\forall A,B \in \mathrm{V}:\, \mathtt{p}(A,B) = \frac{\mathtt{d}(A,B)}{\sum_{C \in \mathrm{V}} \mathtt{d}(A,C)}.$$
For $\bar{\mathrm{G}} = (\mathrm{V},\mathrm{E},\mathtt{d})$, each pair $(A,B) \in \mathrm{E}$ is labeled by $\mathtt{d}(A,B)$. Let us call such a digraph ``An Exponential Distance Graph of a Walking Cat". The distance matrix associated to an exponential distance graph of a walking cat $\bar{\mathrm{G}} = (\mathrm{V},\mathrm{E},\mathtt{d})$ is $\mathsf{D}_{\bar{\mathrm{G}}} := \big(\mathtt{d}(B,A)\big)_{A,B \in \mathrm{V}}$. Moreover, for $\mathrm{V}_i$ defined in Lemma~\ref{LeCo}, the matrix associated to the induced subgraph $\bar{\mathrm{G}}_i = (\mathrm{V}_i,\mathrm{E}_i)$ of $\bar{\mathrm{G}}$ is $\mathsf{D}_{\bar{\mathrm{G}}_i} := \big(\mathtt{d}(B,A)\big)_{A,B \in \mathrm{V}_i}$.

\begin{theorem} \label{Th2}
Let $\bar{\mathrm{G}} = (\mathrm{V},\mathrm{E},\mathtt{d})$ be an exponential distance graph of a walking cat, and $\mathrm{U}_1, \dots, \mathrm{U}_r$ nonempty subsets of $\mathrm{V}$ connected by a corridor and partitioning $\mathrm{V}$ into $s$ sets $\mathrm{V}_1, \dots, \mathrm{V}_s$ as in Lemma~\ref{LeCo}. Besides, assume that for every $i \in [r]$, there is a real variable $q_i$ such that, if $A,B \in \mathrm{U}_i$, we have $\mathtt{d}(A,B) = q_i$. If $\bar{\mathrm{G}}_k = (\mathrm{V}_k,\mathrm{E}_k)$ are the induced subgraphs, then
$$\det \mathsf{D}_{\bar{\mathrm{G}}} = \prod_{i \in [r]} \big(1 + (\#\mathrm{U}_i -1)q_i\big)(1-q_i)^{\#\mathrm{U}_i-1} \prod_{k \in [s]} \det \mathsf{D}_{\bar{\mathrm{G}}_k}.$$
\end{theorem}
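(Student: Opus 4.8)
The plan is to peel off the corridors one at a time with the matrix determinant lemma, inducting on $r$. First I would single out the corridor $\mathrm{U}_r = \{C_1, \dots, C_m\}$, $m := \#\mathrm{U}_r$, and apply Definition~\ref{DeCo} to $\mathrm{U}_r$ alone, partitioning $\mathrm{V}$ into $\mathrm{W}_1, \dots, \mathrm{W}_m$ with $C_j \in \mathrm{W}_j$. Using that $\mathtt{d}$ is multiplicative along minimal sequences together with the third condition of Definition~\ref{DeCo}, for $A \in \mathrm{W}_j$ and $B \in \mathrm{W}_l$ with $j \neq l$ the minimal sequence from $B$ to $A$ passes through $C_l$ then $C_j$, so $\mathtt{d}(B,A) = q_r\,\mathtt{d}(C_j,A)\,\mathtt{d}(B,C_l)$. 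Ordering $\mathrm{V}$ by the blocks $\mathrm{W}_j$, this shows that the $(j,l)$-block of $\mathsf{D}_{\bar{\mathrm{G}}}$ with $j \neq l$ equals $q_r\,u_j v_l^{\top}$ for the vectors $u_j := \big(\mathtt{d}(C_j,A)\big)_{A \in \mathrm{W}_j}$ and $v_l := \big(\mathtt{d}(B,C_l)\big)_{B \in \mathrm{W}_l}$, while the diagonal $(j,j)$-block is the distance matrix $\mathsf{D}_j$ of the induced subgraph on $\mathrm{W}_j$, whose $C_j$-column and $C_j$-row are exactly $u_j$ and $v_j^{\top}$.

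Consequently $\mathsf{D}_{\bar{\mathrm{G}}} = N + q_r\,\mathbf{u}\mathbf{v}^{\top}$, where $N = \mathrm{diag}(N_1, \dots, N_m)$ with $N_j := \mathsf{D}_j - q_r\,u_j v_j^{\top}$, and $\mathbf{u},\mathbf{v}$ are the concatenations of the $u_j,v_j$. Treating the entries over the field of rational functions in the variables (so the matrices below are invertible, the resulting determinant identity being polynomial and hence valid in general), the matrix determinant lemma gives $\det \mathsf{D}_{\bar{\mathrm{G}}} = \det N\,\big(1 + q_r\,\mathbf{v}^{\top} N^{-1}\mathbf{u}\big)$. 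Because $u_j = \mathsf{D}_j\,e_{C_j}$ and $(v_j)_{C_j} = \mathtt{d}(C_j,C_j) = 1$, one gets $v_j^{\top}\mathsf{D}_j^{-1}u_j = 1$, hence $\det N_j = (1-q_r)\det \mathsf{D}_j$; and by Sherman--Morrison $N_j^{-1}u_j = (1-q_r)^{-1}e_{C_j}$, so $v_j^{\top}N_j^{-1}u_j = (1-q_r)^{-1}$. Summing over the $m$ blocks, $1 + q_r\,\mathbf{v}^{\top}N^{-1}\mathbf{u} = \big(1+(m-1)q_r\big)/(1-q_r)$, and combining with $\det N = (1-q_r)^m\prod_j \det \mathsf{D}_j$ yields $\det \mathsf{D}_{\bar{\mathrm{G}}} = \big(1+(\#\mathrm{U}_r-1)q_r\big)(1-q_r)^{\#\mathrm{U}_r-1}\prod_{j}\det\mathsf{D}_j$.

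To finish I would induct on $r$: each $\mathrm{W}_j$, equipped with the corridors $\mathrm{U}_i$ ($i<r$) that it contains, is again an exponential distance graph of a walking cat, and its own refined partition consists of exactly the blocks $\mathrm{V}_k$ lying in $\mathrm{W}_j$; applying the induction hypothesis to each $\mathsf{D}_j$ and multiplying over $j$ reproduces the remaining factors $\prod_{i<r}\big(1+(\#\mathrm{U}_i-1)q_i\big)(1-q_i)^{\#\mathrm{U}_i-1}$ and $\prod_{k}\det\mathsf{D}_{\bar{\mathrm{G}}_k}$, since every $\mathrm{U}_i$ with $i<r$ sits in a unique $\mathrm{W}_j$ and the $\mathrm{V}_k$ partition the $\mathrm{W}_j$. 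The main obstacle is precisely this combinatorial bookkeeping: I must verify, from Definition~\ref{DeCo} and Lemma~\ref{LeCo}, that removing one corridor leaves each $\mathrm{W}_j$ a well-defined smaller instance whose corridors are exactly the $\mathrm{U}_i \subseteq \mathrm{W}_j$ --- in particular that each remaining $\mathrm{U}_i$ lies in a single part $\mathrm{W}_j$ and that minimal sequences restrict correctly. Once this nestedness is in place the linear-algebra step above applies verbatim at every level, and the base case $r=1$ is the single-corridor computation already carried out.
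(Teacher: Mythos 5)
Your strategy is sound in outline and genuinely different from the paper's. The paper proves a standalone algebraic result (Theorem~\ref{ThMat}): it factors the block matrix $M_q(\mathsf{A}_1,\dots,\mathsf{A}_r)$ as $\big(\bigoplus_k \mathsf{A}_k\big)\mathsf{F}$ for an explicit sparse matrix $\mathsf{F}$, reduces $\det\mathsf{F}$ to an $r\times r$ determinant, and evaluates that by a derangement count (Lemma~\ref{LeMat}); Theorem~\ref{Th1} then iterates this corridor by corridor along the nested partition of Lemma~\ref{LeCo}, and Theorem~\ref{Th2} is deduced from Theorem~\ref{Th1}'s alternating-sum coefficient via a separate binomial identity showing it equals $\big(1+(\#\mathrm{U}_i-1)q_i\big)(1-q_i)^{\#\mathrm{U}_i-1}$. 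You exploit the same rank-one structure of the off-diagonal blocks, but obtain the closed product form directly, one corridor at a time, via the matrix determinant lemma and Sherman--Morrison, bypassing both the derangement lemma and the binomial identity. The nestedness bookkeeping you defer (each $\mathrm{U}_i$ with $i<r$ lies in a single $\mathrm{W}_j$, minimal sequences restrict to blocks) is exactly what the proof of Lemma~\ref{LeCo} establishes, so that part is available and unproblematic.

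There is, however, one genuine gap: the invertibility justification. The entries of $\mathsf{D}_j$ are not free indeterminates but specific monomials dictated by the graph, and passing to the field of rational functions does not make $\mathsf{D}_j$ invertible, because $\det\mathsf{D}_j$ can be the \emph{zero} polynomial. Concretely, an apartment consisting of three rooms forming a complete digraph with all distances equal to $1$ is a legitimate exponential distance graph (its dual probabilities are all $1/3$), and its block $\mathsf{D}_j$ is the all-ones $3\times3$ matrix, identically singular; attaching it to a corridor gives an instance where $\mathsf{D}_j^{-1}$ and $N_j^{-1}$ do not exist, so your uses of the determinant lemma in quotient form and of Sherman--Morrison are vacuous there (the theorem then reads $0=0$, but your derivation does not reach it). Two standard repairs: (i) replace inverses by adjugates throughout, using $\det(A+uv^{\top})=\det A+v^{\top}\operatorname{adj}(A)\,u$, $\operatorname{adj}(\mathsf{D}_j)u_j=(\det\mathsf{D}_j)\,e_{C_j}$ (since $u_j=\mathsf{D}_je_{C_j}$), and $\operatorname{adj}\big(\mathsf{I}-q_r e_{C_j}v_j^{\top}\big)=(1-q_r)\,\mathsf{I}+q_r e_{C_j}v_j^{\top}$; these are polynomial identities, and they give $\det N_j=(1-q_r)\det\mathsf{D}_j$, $v_j^{\top}\operatorname{adj}(N_j)u_j=\det\mathsf{D}_j$, and then your final computation verbatim, with no invertibility hypothesis; or (ii) prove the one-corridor identity first for diagonal blocks with free indeterminate entries and off-diagonal blocks $q_r(\mathsf{A}_je_{C_j})(e_{C_l}^{\top}\mathsf{A}_l)$ --- where genericity does hold --- and then specialize the entries to the actual monomials; the genericization must be of the entries themselves, not merely a change of the ambient field. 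Note that the paper's factorization $M_q=\big(\bigoplus_k\mathsf{A}_k\big)\mathsf{F}$ sidesteps this issue entirely, since it is an exact matrix identity requiring no inverses.
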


\noindent This article is structured as follows: We first compute a determinant constructed from a set of square matrices in Section~\ref{SeSet}. It will be used to prove Theorem~\ref{Th1} and Theorem~\ref{Th2} in Section~\ref{SePr}. Then, we finish with the determinants of exponential distance matrices constructed from indirectly acyclic digraphs and hyperplane arrangements in Appendix~\ref{SeCo}.

\section{A Determinant based on a Set of Matrices} \label{SeSet}

\noindent We compute a determinant defined from a set of square matrices. The author would like to thank \href{https://math.cornell.edu/marcelo-aguiar}{Marcelo Aguiar} for having led him to that computing.

\begin{definition} \label{DeMat}
	Let $\mathsf{A}_1, \dots, \mathsf{A}_r$ be square matrices such that $I_k = \{i_1^k, \dots, i_{n_k}^k\}$ indexes $\mathsf{A}_k$ and $\mathsf{A}_k = (a_{i,j})_{i,j \in I_k}$ for $k \in [r]$. Define the square matrix $M_q(\mathsf{A}_1, \dots, \mathsf{A}_r) = (m_{i,j})_{i,j \in I}$ indexed by $\displaystyle I = \bigsqcup_{k \in [r]}I_k$ such that, if $i \in I_h$ and $j \in I_k$, then $\displaystyle m_{i,j} := \begin{cases}
	a_{i,j} & \text{if}\ h=k,\\
	q \cdot a_{i, i_1^h} \cdot a_{i_1^k, j} & \text{otherwise}
	\end{cases}$.
\end{definition}

\begin{example} \label{ExAB}
	If $\mathsf{A} = \begin{pmatrix} a_{11} & a_{12} \\ a_{21} & a_{22}
	\end{pmatrix}$ and $\mathsf{B} = \begin{pmatrix} b_{11} & b_{12} & b_{13} \\ b_{21} & b_{22} & b_{23} \\ b_{31} & b_{32} & b_{33}
	\end{pmatrix}$, then
	$$M_q(\mathsf{A}, \mathsf{B}) = \begin{pmatrix}
	a_{11} & a_{12} & q a_{11} b_{11} & q a_{11} b_{12} & q a_{11} b_{13} \\
	a_{21} & a_{22} & q a_{21} b_{11} & q a_{21} b_{12} & q a_{21} b_{13} \\
	q a_{11} b_{11} & q a_{12} b_{11} & b_{11} & b_{12} & b_{13} \\
	q a_{11} b_{21} & q a_{12} b_{21} & b_{21} & b_{22} & b_{23} \\
	q a_{11} b_{31} & q a_{12} b_{31} & b_{31} & b_{32} & b_{33}
	\end{pmatrix}.$$
\end{example}

\noindent Denote by $\mathfrak{D}_n$ the set formed by the derangements of order $n$. 

\begin{lemma} \label{LeMat}
	Take an integer $n \geq 2$, and $n$ variables $a_1, \dots, a_n$. Then,
	$$\det \begin{pmatrix}
	a_1 & 1 & \cdots & 1 \\
	1 & a_2 & \cdots & 1 \\
	\vdots & \vdots & \ddots & \vdots \\
	1 & 1 & \cdots & a_n
	\end{pmatrix} = \prod_{i \in [n]}a_i + \sum_{\substack{I \subseteq [n] \\ \#I \leq n-2}} (-1)^{n-\#I-1}(n-\#I-1) \prod_{i \in I}a_i.$$
\end{lemma}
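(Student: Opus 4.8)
The plan is to recognize the matrix in the statement, call it $\mathsf{M}$, as a rank-one perturbation of a diagonal matrix and then expand the resulting closed form. Writing $\mathbf{1} = (1, \dots, 1)^{\mathsf{T}}$ for the all-ones column vector and $\mathsf{D} = \operatorname{diag}(a_1 - 1, \dots, a_n - 1)$, one has $\mathsf{M} = \mathsf{D} + \mathbf{1}\mathbf{1}^{\mathsf{T}}$, since every off-diagonal entry is $1$ while each diagonal entry is $(a_i - 1) + 1 = a_i$. This reduces the computation to a single determinant of a diagonal-plus-rank-one matrix.

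The key tool is the general matrix determinant lemma in its adjugate form, $\det(\mathsf{D} + uv^{\mathsf{T}}) = \det \mathsf{D} + v^{\mathsf{T}} \operatorname{adj}(\mathsf{D})\, u$, which holds as a polynomial identity in the entries of $\mathsf{D}$ for arbitrary square $\mathsf{D}$ and hence needs no invertibility hypothesis. Since the adjugate of a diagonal matrix is $\operatorname{adj}(\mathsf{D}) = \operatorname{diag}\big(\prod_{i \neq 1}(a_i - 1), \dots, \prod_{i \neq n}(a_i - 1)\big)$, taking $u = v = \mathbf{1}$ yields
$$\det \mathsf{M} = \prod_{i \in [n]}(a_i - 1) + \sum_{j \in [n]} \prod_{i \neq j}(a_i - 1).$$

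It then remains to expand this in the monomial basis $\big\{\prod_{i \in S} a_i\big\}_{S \subseteq [n]}$ and match coefficients with the claimed right-hand side. Using $\prod_{i \in T}(a_i - 1) = \sum_{S \subseteq T}(-1)^{\#T - \#S}\prod_{i \in S} a_i$, the monomial $\prod_{i \in S} a_i$ with $\#S = k$ collects a coefficient $(-1)^{n-k}$ from the first product together with $(n - k)(-1)^{n-1-k}$ from the sum, the factor $n-k$ counting the indices $j \notin S$ for which $S \subseteq [n] \setminus \{j\}$. Adding these gives $(-1)^{n-k}\big(1 - (n-k)\big) = (-1)^{n-k-1}(n - k - 1)$, which is exactly the stated coefficient.

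The one point demanding care is this coefficient bookkeeping, and in particular the observation that the coefficient vanishes when $k = n - 1$: this is precisely why the stated sum ranges only over $\#I \leq n - 2$, while the single set $\#S = n$ produces the leading term $\prod_{i \in [n]} a_i$ with coefficient $1$ (indeed $(-1)^{-1}(-1) = 1$). I expect no genuine obstacle beyond this careful comparison. An alternative route by induction on $n$, subtracting one row from the others to create zeros and expanding by cofactors along the last row, would also succeed but is combinatorially heavier and less transparent than the rank-one argument above.
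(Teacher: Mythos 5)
Your proof is correct, but it takes a genuinely different route from the paper's. The paper expands the determinant directly over permutations: a permutation contributes to the monomial $\prod_{i \in I} a_i$ exactly when its fixed-point set is $I$, so the coefficient of $\prod_{i \in I} a_i$ is the signed sum over derangements of the complementary $n-\#I$ indices, which the paper evaluates as $(-1)^{n-\#I-1}(n-\#I-1)$ by citing Shattuck's parity theorem (reference [Sh, Theorem~3.2] there). You instead recognize the matrix as $\operatorname{diag}(a_1-1,\dots,a_n-1) + \mathbf{1}\mathbf{1}^{\mathsf{T}}$, apply the matrix determinant lemma in adjugate form to get the closed expression $\prod_{i\in[n]}(a_i-1) + \sum_{j\in[n]}\prod_{i\neq j}(a_i-1)$, and then expand binomially; your coefficient bookkeeping $(-1)^{n-k}\big(1-(n-k)\big) = (-1)^{n-k-1}(n-k-1)$ is accurate, including the vanishing at $k = n-1$ (which explains the restriction $\#I \leq n-2$ in the statement) and the leading coefficient $1$ at $k = n$, and your remark that the adjugate identity is a polynomial identity correctly disposes of any invertibility concern. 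What each approach buys: yours is self-contained and elementary, needs no external citation, and produces a compact closed form as an intermediate result that is arguably cleaner than the stated expansion; the paper's is shorter modulo the cited result and makes the combinatorial meaning of the coefficients transparent---they are literally the signed derangement counts $\sum_{\sigma \in \mathfrak{D}_{n-\#I}} \operatorname{sgn}\sigma$. Incidentally, the paper's displayed formula writes that quantity with a product symbol over $\sigma$ where a sum is meant (a typo); your route avoids this combinatorial input entirely.
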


\begin{proof}
	Denoting by $\Delta$ the aimed determinant, it is clear that $\displaystyle \big[\prod_{i \in [n]}a_i \big] \Delta = 1$ and for $I \subseteq [n]$ such that $\#I = n-1$ we have $\displaystyle \big[\prod_{i \in I}a_i \big] \Delta = 0$. Now if $\#I \leq n-2$, using \cite[Theorem~3.2]{Sh}, we obtain $$\big[\prod_{i \in I}a_i \big] \Delta = \prod_{\sigma \in \mathfrak{D}_{n-\#I}} \mathrm{sgn}\,\sigma = (-1)^{n-\#I-1}(n-\#I-1).$$
\end{proof}

\noindent Denote by $\mathsf{I}_n$ the identity matrix of order $n$.

\begin{theorem} \label{ThMat}
	Let $\mathsf{A}_1, \dots, \mathsf{A}_r$ be square matrices such that $I_k = \{i_1^k, \dots, i_{n_k}^k\}$ indexes $\mathsf{A}_k$ and $\mathsf{A}_k = (a_{i,j})_{i,j \in I_k}$ for $k \in [r]$. Then,
	$$\det M_q(\mathsf{A}_1, \dots, \mathsf{A}_r) =  \bigg(1 + \sum_{\substack{K \subseteq [r] \\ \#K \geq 2}} (-1)^{\#K-1}(\#K-1) \prod_{k \in K} qa_{i_1^k,i_1^k}\bigg) \prod_{k \in [r]} \det \mathsf{A}_k.$$
\end{theorem}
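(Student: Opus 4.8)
The plan is to exploit the rank-one structure of the off-diagonal blocks. Writing $u_h := (a_{i,i_1^h})_{i \in I_h}$ for the $i_1^h$-th column of $\mathsf{A}_h$ and $v_k := (a_{i_1^k,j})_{j \in I_k}$ for the $i_1^k$-th row of $\mathsf{A}_k$, the $(h,k)$ block of $M_q(\mathsf{A}_1,\dots,\mathsf{A}_r)$ with $h \neq k$ is exactly the outer product $q\,u_h v_k$, while the diagonal blocks are the $\mathsf{A}_k$ themselves. First I would argue that it suffices to treat the generic case in which every $\mathsf{A}_k$ is invertible: both sides of the claimed identity are polynomials in the entries $a_{i,j}$ and in $q$, so agreement on the Zariski-dense locus $\prod_k \det \mathsf{A}_k \neq 0$ forces agreement everywhere. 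On that locus I would right-multiply $M_q$ by the block-diagonal matrix $\operatorname{blockdiag}(\mathsf{A}_k^{-1})$. Since $v_k \mathsf{A}_k^{-1} = e_{i_1^k}^{\mathsf{T}}$, the $(h,k)$ block with $h \neq k$ collapses to $q\,u_h e_{i_1^k}^{\mathsf{T}}$ and each diagonal block becomes $\mathsf{I}_{n_k}$; denoting the resulting matrix $M'$, we get $\det M_q = \det M' \prod_k \det \mathsf{A}_k$, so everything reduces to computing $\det M'$.

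The matrix $M'$ equals $\mathsf{I}_n + qW$, where $W$ is supported on the $r$ distinguished columns $i_1^1,\dots,i_1^r$ only. Encoding this, I would set $U$ to be the $n\times r$ matrix whose $k$-th column is $u_k$ placed in block $k$, and $E$ the $n \times r$ matrix whose $k$-th column is the standard basis vector $e_{i_1^k}$; a direct block computation gives $W = U(J_r - \mathsf{I}_r)E^{\mathsf{T}}$, where $J_r$ is the all-ones matrix (the factor $J_r - \mathsf{I}_r$ precisely kills the diagonal blocks). Sylvester's determinant identity then turns the $n\times n$ determinant into an $r \times r$ one,
$$\det M' = \det\!\big(\mathsf{I}_r + q\,(J_r - \mathsf{I}_r)\,E^{\mathsf{T}}U\big).$$
The point of the whole reduction is that $E^{\mathsf{T}}U$ is diagonal: its $(k,h)$ entry is $e_{i_1^k}^{\mathsf{T}}\,(u_h \text{ placed in block } h)$, which vanishes unless $h=k$ and otherwise equals the $i_1^k$-th entry of $u_k$, namely $a_{i_1^k,i_1^k}$. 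Writing $\alpha_k := a_{i_1^k,i_1^k}$, the remaining determinant is that of the $r\times r$ matrix with $1$'s on the diagonal and off-diagonal $(i,j)$-entry equal to $q\alpha_j$.

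To finish I would factor $q\alpha_j$ out of the $j$-th column, reducing the last determinant to $\prod_{j\in[r]}(q\alpha_j)$ times the determinant of the matrix with diagonal entries $1/(q\alpha_j)$ and all off-diagonal entries $1$, which is exactly the shape evaluated in Lemma~\ref{LeMat} with $n = r$ and $a_j = 1/(q\alpha_j)$. Multiplying the prefactor $\prod_j (q\alpha_j)$ back inside and re-indexing the sum of Lemma~\ref{LeMat} by the complement $K = [r]\setminus I$ (so that $\#I \leq r-2$ becomes $\#K \geq 2$ and $r-\#I-1$ becomes $\#K-1$) converts $\prod_{i\in I} a_i$ into $\prod_{k\in K} q\alpha_k = \prod_{k\in K} q\,a_{i_1^k,i_1^k}$ and yields precisely the bracketed factor in the statement. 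I expect the main obstacle to be the bookkeeping in the two block-matrix identities, namely verifying $W = U(J_r-\mathsf{I}_r)E^{\mathsf{T}}$ and that $E^{\mathsf{T}}U$ is diagonal with the right entries, since everything else is either the standard Sylvester identity or a direct appeal to Lemma~\ref{LeMat}; the polynomial-identity argument then removes the invertibility hypothesis for free.
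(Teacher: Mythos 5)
Your proof is correct, and its skeleton is the same as the paper's: peel off the block-diagonal factor $\bigoplus_{k}\mathsf{A}_k$, reduce what remains to the $r\times r$ matrix with $1$'s on the diagonal and $q\,a_{i_1^j,i_1^j}$ constant down column $j$ off the diagonal, then evaluate that matrix by pulling $q\,a_{i_1^j,i_1^j}$ out of each column and invoking Lemma~\ref{LeMat} with $a_j = 1/(q\,a_{i_1^j,i_1^j})$, re-indexed by complements. The difference is in the middle step. The paper never inverts anything: it exhibits the explicit factorization $M_q(\mathsf{A}_1,\dots,\mathsf{A}_r) = \big(\bigoplus_k \mathsf{A}_k\big)\,\mathsf{F}$, where $\mathsf{F}$ has identity diagonal blocks and off-diagonal blocks supported on a single row (namely $q$ times the distinguished row of $\mathsf{A}_k$), and then reduces $\det\mathsf{F}$ to $\det\mathsf{F}[J]$, $J=\{i_1^1,\dots,i_1^r\}$, by block triangularity (phrased as a Schur-complement formula whose correction term vanishes because $\mathsf{F}[I\setminus J, J]=0$). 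You instead right-multiply by $\operatorname{blockdiag}(\mathsf{A}_k^{-1})$, which collapses the off-diagonal blocks to $q\,u_h e_{i_1^k}^{\mathsf{T}}$, and apply Sylvester's identity $\det(\mathsf{I}+AB)=\det(\mathsf{I}+BA)$; this lands on exactly the same $r\times r$ matrix, at the cost of an invertibility hypothesis that you then remove by Zariski density. So your route trades the paper's division-free factorization for a genericity argument plus Sylvester --- a perfectly sound exchange, just not a cheaper one. One nit: your dense locus should also require $q\prod_k a_{i_1^k,i_1^k}\neq 0$, since the final reciprocal trick divides by these diagonal entries; the paper commits the same sin silently, and the same polynomial-identity argument you already invoke repairs it in both proofs.
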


\begin{proof}
	Remark first that $M_q(\mathsf{A}_1, \dots, \mathsf{A}_r)$ is equal to the product of the square matrix $\displaystyle \bigoplus_{k \in [r]}^{\rightarrow} \mathsf{A}_k$ with the square matrix $\displaystyle \mathsf{F} = \begin{pmatrix}
	\mathsf{F}_{11} & \mathsf{F}_{12} & \cdots & \mathsf{F}_{1r} \\
	\mathsf{F}_{21} & \mathsf{F}_{22} & \cdots & \mathsf{F}_{2r} \\
	\vdots & \vdots & \ddots & \vdots \\
	\mathsf{F}_{r1} & \mathsf{F}_{r2} & \cdots & \mathsf{F}_{rr}
	\end{pmatrix}$ where $\mathsf{F}_{hk}$ is the $n_k \times n_h$ matrix such that
	$$\mathsf{F}_{hk} = \begin{cases}
	\quad \mathsf{I}_{n_h} & \text{if}\ h=k, \\
	\begin{pmatrix}
	qa_{i_1^k,i_1^k} & qa_{i_1^k,i_2^k} & qa_{i_1^k,i_3^k} & \dots & qa_{i_1^k,i_{n_k}^k} \\
	0 & 0 & 0 & \cdots & 0 \\
	\vdots & \vdots & \vdots & \ddots & \vdots \\
	0 & 0 & 0 & \cdots & 0
	\end{pmatrix} & \text{otherwise}.
	\end{cases}$$
	In Example~\ref{ExAB}, $\displaystyle M_q(\mathsf{A}, \mathsf{B}) = \begin{pmatrix}
	a_{11} & a_{12} & 0 & 0 & 0 \\
	a_{21} & a_{22} & 0 & 0 & 0 \\
	0 & 0 & b_{11} & b_{12} & b_{13} \\
	0 & 0 & b_{21} & b_{22} & b_{23} \\
	0 & 0 & b_{31} & b_{32} & b_{33}
	\end{pmatrix} \begin{pmatrix}
	1 & 0 & q b_{11} & q b_{12} & q b_{13} \\
	0 & 1 & 0 & 0 & 0 \\
	q a_{11} & q a_{12} & 1 & 0 & 0 \\
	0 & 0 & 0 & 1 & 0 \\
	0 & 0 & 0 & 0 & 1
	\end{pmatrix}$ for instance. Let $\displaystyle I = \bigsqcup_{k \in [r]}I_k$ and $\displaystyle J = \bigsqcup_{k \in [r]}\{i_1^k\}$. Using the determinantal formula, we obtain $$\det \mathsf{F} = \det \mathsf{F}[J] \ \det\big(\mathsf{F}[I \setminus J] - \mathsf{F}[I \setminus J, J] \mathsf{F}[J]^{-1} \mathsf{F}[J, I \setminus J] \big)$$
	where $\mathsf{F}[J]$ is the $r \times r$ circulant matrix $\begin{pmatrix}
	1 & qa_{i_1^2,i_1^2} & \cdots & qa_{i_1^r,i_1^r} \\ qa_{i_1^1,i_1^1} & 1 & \cdots & qa_{i_1^r,i_1^r} \\
	\vdots & \ddots & \ddots & \vdots \\
	qa_{i_1^1,i_1^1} & \cdots & qa_{i_1^{r-1},i_1^{r-1}} & 1
	\end{pmatrix}$, $\mathsf{F}[I \setminus J] = \mathsf{I}_{\#I \setminus J}$, and $\mathsf{F}[I \setminus J, J]$ is the $\#I \setminus J \times r$ null matrix. Using Lemma~\ref{LeMat}, we obtain
	$$\det \mathsf{F}[J] = \prod_{k \in [r]} qa_{i_1^k,i_1^k} \times \begin{vmatrix}
	\frac{1}{qa_{i_1^1,i_1^1}} & 1 & \cdots & 1 \\
	1 & \frac{1}{qa_{i_1^2,i_1^2}} & \cdots & 1 \\
	\vdots & \ddots & \ddots & \vdots \\
	1 & \cdots & 1 & \frac{1}{qa_{i_1^r,i_1^r}}
	\end{vmatrix} = 1 + \sum_{\substack{K \subseteq [r] \\ \#K \geq 2}} (-1)^{\#K-1}(\#K-1) \prod_{k \in K} qa_{i_1^k,i_1^k}.$$
	Finally with $\displaystyle \det \bigoplus_{k \in [r]}^{\rightarrow} \mathsf{A}_k = \prod_{k \in [r]} \det \mathsf{A}_k$, we get the result.
\end{proof}

\section{Proof of Lemma~\ref{LeCo}, Theorem~\ref{Th1}, and Theorem~\ref{Th2}} \label{SePr}

\noindent We begin by proving Lemma~\ref{LeCo}, then Theorem~\ref{Th1}, and finally Theorem~\ref{Th2}.

\begin{proof}
	Consider first $\mathrm{U}_1$ partitioning $\mathrm{V}$ into $\mathrm{V}_1^{(1)}, \dots, \mathrm{V}_{\#\mathrm{U}_1}^{(1)}$. For $i \in [\#\mathrm{U}_1]$, set $\mathrm{U}_1 \cap \mathrm{V}_i^{(1)} = \{C_i^{(1)}\}$. If $i,j \in [\#\mathrm{U}_1]$ with $i \neq j$, as $\big\{(A,B)\ \big|\ A \in \mathrm{V}_i^{(1)},\, B \in \mathrm{V}_j^{(1)}\big\} = \big\{(C_i^{(1)}, C_j^{(1)})\big\}$, $\mathrm{U}_2$ is then included in some $\mathrm{V}_i^{(1)}$ that we assume to be $\mathrm{V}_{\#\mathrm{U}_1}^{(1)}$. From its definition, $\mathrm{U}_2$ also partitions $\mathrm{V}_{\#\mathrm{U}_1}^{(1)}$ into $\mathrm{V}_1^{(2)}, \dots, \mathrm{V}_{\#\mathrm{U}_2}^{(2)}$, and the partition $\mathrm{V}_1^{(1)}, \dots, \mathrm{V}_{\#\mathrm{U}_1 -1}^{(1)}, \mathrm{V}_1^{(2)}, \dots, \mathrm{V}_{\#\mathrm{U}_2}^{(2)}$ has the property of Lemma~\ref{LeCo} for $\mathrm{U}_1$ and $\mathrm{U}_2$. By induction, we obtain the partition of $\displaystyle \sum_{i \in [r-1]} \#\mathrm{U}_i -r+2$ sets $\mathrm{V}_1^{(1)}, \dots, \mathrm{V}_{\#\mathrm{U}_1 -1}^{(1)}, \mathrm{V}_1^{(2)}, \dots, \mathrm{V}_{\#\mathrm{U}_2-1}^{(2)}, \dots, \mathrm{V}_1^{(n-1)}, \dots, \mathrm{V}_{\#\mathrm{U}_{n-1}}^{(n-1)}$ having the property of Lemma~\ref{LeCo} after the $(n-1)^{\text{th}}$ step. Taking any two different sets $\mathrm{V}_i^{(l)}, \mathrm{V}_j^{(k)}$ of those latter, either $\big\{(A,B)\ \big|\ A \in \mathrm{V}_i^{(l)},\, B \in \mathrm{V}_j^{(k)}\big\}$ is equal to some $\big\{(C_i^{(l)}, C_m^{(l)})\big\}$ or is empty. Hence, $\mathrm{U}_n$ is included in exactly one of these $\displaystyle \sum_{i \in [r-1]} \#\mathrm{U}_i -r+2$ sets that we assume to be $\mathrm{V}_{\#\mathrm{U}_{n-1}}^{(n-1)}$. After its partitioning by $\mathrm{U}_n$, we finally obtain the desired $\displaystyle \sum_{i \in [r]} \#\mathrm{U}_i -r+1$ sets $\mathrm{V}_1^{(1)}, \dots, \mathrm{V}_{\#\mathrm{U}_1 -1}^{(1)}, \mathrm{V}_1^{(2)}, \dots, \mathrm{V}_{\#\mathrm{U}_2-1}^{(2)}, \dots, \mathrm{V}_1^{(n-1)}, \dots, \mathrm{V}_{\#\mathrm{U}_{n-1}-1}^{(n-1)}, \mathrm{V}_1^{(n)}, \dots, \mathrm{V}_{\#\mathrm{U}_{n}}^{(n)}$.
\end{proof}

\begin{proof}
Considering the sets $\mathrm{V}_1^{(1)}, \dots, \mathrm{V}_{\#\mathrm{U}_1}^{(1)}, \dots, \mathrm{V}_1^{(n)}, \dots, \mathrm{V}_{\#\mathrm{U}_{n}}^{(n)}$ in the proof of Lemma~\ref{LeCo}, let $\mathsf{S}_{\mathrm{G}_i^{(k)}} := \big(\mathtt{p}(B,A)\big)_{A,B \in \mathrm{V}_i^{(k)}}$. Using Theorem~\ref{ThMat}, we successively get
\begin{align*}
\det \mathsf{S}_{\mathrm{G}} & = \det M_{c_1}(\mathrm{V}_1^{(1)}, \dots, \mathrm{V}_{\#\mathrm{U}_1}^{(1)}) \\
& = \bigg(1 + \sum_{\substack{\mathrm{K} \subseteq \mathrm{U}_1 \\ \#\mathrm{K} \geq 2}} (-1)^{\#\mathrm{K}-1}(\#\mathrm{K}-1) \prod_{A \in \mathrm{K}} c_1\, \mathtt{p}(A,A)\bigg) \prod_{k \in [\#\mathrm{U}_1-1]} \det \mathsf{S}_{\mathrm{G}_k^{(1)}} \\
& \quad \times \det M_{c_2}(\mathrm{V}_1^{(2)}, \dots, \mathrm{V}_{\#\mathrm{U}_2}^{(2)}) \\
& = \prod_{i \in [r]} \bigg(1 + \sum_{\substack{\mathrm{K} \subseteq \mathrm{U}_i \\ \#\mathrm{K} \geq 2}} (-1)^{\#\mathrm{K}-1}(\#\mathrm{K}-1) \prod_{A \in \mathrm{K}} c_i\, \mathtt{p}(A,A)\bigg) \\
& \quad \times \prod_{l \in [n]} \prod_{k \in [\#\mathrm{U}_l-1]} \det \mathsf{S}_{\mathrm{G}_k^{(l)}} \times \det \mathsf{S}_{\mathrm{G}_{\#\mathrm{U}_n}^{(n)}}.
\end{align*}
\end{proof}

\begin{proof}
With an argument similar to the proof of Theorem~\ref{Th1}, we obtain
$$\det \mathsf{D}_{\bar{\mathrm{G}}} = \prod_{i \in [r]} \bigg(1 + \sum_{\substack{\mathrm{K} \subseteq \mathrm{U}_i \\ \#\mathrm{K} \geq 2}} (-1)^{\#\mathrm{K}-1}(\#\mathrm{K}-1) \prod_{A \in \mathrm{K}} q_i\, \mathtt{d}(A,A)\bigg) \times \prod_{l \in [n]} \prod_{k \in [\#\mathrm{U}_l-1]} \det \mathsf{D}_{\bar{\mathrm{G}}_k^{(l)}} \times \det \mathsf{D}_{\bar{\mathrm{G}}_{\#\mathrm{U}_n}^{(n)}}.$$ Then, on one side $\mathtt{d}(A,A) = 1$, and on the other side
	\begin{align*}
	\big(1 + (\#\mathrm{U}_i -1)q_i\big)(1-q_i)^{\#\mathrm{U}_i-1} & = \big(1 + (\#\mathrm{U}_i-1)q_i\big) \sum_{k=0}^{\#\mathrm{U}_i-1}(-1)^k \binom{\#\mathrm{U}_i-1}{k} q_i^k \\
	& = 1 + (-1)^{\#\mathrm{U}_i-1}(\#\mathrm{U}_i-1)q_i^{\#\mathrm{U}_i} \\
	& \quad + \sum_{k=0}^{\#\mathrm{U}_i-2} \bigg((-1)^k (\#\mathrm{U}_i-1) \binom{\#\mathrm{U}_i-1}{k} + (-1)^{k+1} \binom{\#\mathrm{U}_i-1}{k+1}\bigg)q_i^{k+1} \\
	& = 1 + (-1)^{\#\mathrm{U}_i-1}(r-1)q_i^{\#\mathrm{U}_i} + \sum_{k=0}^{\#\mathrm{U}_i-2} (-1)^k k \binom{\#\mathrm{U}_i}{k+1} q_i^{k+1} \\
	& = 1 + \sum_{k=2}^{\#\mathrm{U}_i} (-1)^{k-1}(k-1) \binom{\#\mathrm{U}_i}{k} q_i^k.
	\end{align*}
\end{proof}

\appendix

\section{Examples of Exponential Distance} \label{SeCo}

\noindent We compute the determinant of matrices associated to two exponential distance graphs.

\paragraph{Indirectly Acyclic Digraph.} Transform a digraph $\mathrm{G} = (\mathrm{V},\mathrm{E})$ to an undirected graph $\mathrm{u(G)} = (\mathrm{V},\mathrm{u(E)})$ by defining $\displaystyle \mathrm{u(E)} := \big\{\{A,B\} \in \binom{\mathrm{V}}{2}\ \big|\ (A,B) \in \mathrm{E}\big\}$. We say that the digraph $\mathrm{G}$ is indirectly acyclic if the undirected graph $\mathrm{u(G)}$ is acyclic.

\begin{lemma} \label{LeDi}
Let $\bar{\mathrm{G}} = (\mathrm{V},\mathrm{E},\mathtt{d})$ be an indirectly acyclic exponential distance graph of a walking cat. Then,
$$\det \mathsf{D}_{\bar{\mathrm{G}}} = \prod_{\{A,B\} \in \mathrm{u(E)}} \big(1 - \mathtt{d}(A,B)\, \mathtt{d}(B,A)\big).$$
\end{lemma}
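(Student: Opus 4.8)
The plan is to argue by induction on $\#\mathrm{V}$, exploiting the fact that, since $\bar{\mathrm{G}}$ is connected and indirectly acyclic, the undirected graph $\mathrm{u(G)}$ is a tree and therefore possesses a leaf. The base case $\#\mathrm{V} = 1$ is immediate: $\mathsf{D}_{\bar{\mathrm{G}}} = (1)$ and the empty product equals $1$.

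For the inductive step I would fix a leaf $L$ of $\mathrm{u(G)}$ and let $P$ be its unique neighbour. Because $\mathrm{u(G)}$ is a tree, for every vertex $B \neq L$ the unique minimal sequence in $\mathscr{M}(B, L)$ has $P$ as its penultimate vertex; combined with the multiplicativity axiom of the exponential distance this gives $\mathtt{d}(B, L) = \mathtt{d}(B, P)\,\mathtt{d}(P, L)$. Writing $\mathsf{D}_{\bar{\mathrm{G}}} = \big(\mathtt{d}(B,A)\big)_{A,B \in \mathrm{V}}$, this identity says precisely that the row indexed by $L$ equals $\mathtt{d}(P, L)$ times the row indexed by $P$ in every column $B \neq L$, while the diagonal entry in that row is $\mathtt{d}(L, L) = 1$.

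The key manipulation is then the row operation replacing the $L$-th row by itself minus $\mathtt{d}(P, L)$ times the $P$-th row. This leaves the determinant unchanged and turns the $L$-th row into a vector whose only nonzero entry is the diagonal one, namely $1 - \mathtt{d}(L, P)\,\mathtt{d}(P, L)$. A Laplace expansion along this row yields
$$\det \mathsf{D}_{\bar{\mathrm{G}}} = \big(1 - \mathtt{d}(L, P)\,\mathtt{d}(P, L)\big)\,\det \mathsf{D}_{\bar{\mathrm{G}}'},$$
where $\bar{\mathrm{G}}'$ is the induced subgraph on $\mathrm{V} \setminus \{L\}$ and $\mathsf{D}_{\bar{\mathrm{G}}'}$ its distance matrix, i.e.\ the minor obtained by deleting the row and column of $L$ (the remaining rows being untouched by the operation). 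Since removing a leaf from a tree again gives a connected indirectly acyclic graph, and since no minimal sequence between two vertices of $\mathrm{V} \setminus \{L\}$ uses $L$ so that the restricted distances are unchanged, $\bar{\mathrm{G}}'$ lies in the same class and the induction hypothesis applies. As $\mathrm{u(E)}$ is the disjoint union of $\{P, L\}$ with the edge set of $\bar{\mathrm{G}}'$, multiplying the recovered factor by the inductive product delivers the claimed formula.

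The step I expect to require the most care is the justification that the unique minimal sequence from an arbitrary vertex to the leaf factors through $P$, and that deleting $L$ neither disconnects the graph nor alters the minimal sequences among the remaining vertices, so that the cofactor genuinely is the distance matrix of a smaller graph in the same family. This rests entirely on the uniqueness of paths in a tree, which the indirect-acyclicity hypothesis guarantees.
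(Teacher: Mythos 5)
Your proof is correct and follows essentially the same route as the paper: induction on the number of vertices, peeling off a leaf whose row (in the paper, column) is a $\mathtt{d}(P,L)$-multiple of its neighbour's off the diagonal, so that one elementary operation and a Laplace expansion extract the factor $1 - \mathtt{d}(L,P)\,\mathtt{d}(P,L)$. Your formulation (deleting a leaf rather than attaching one, and explicitly checking that the reduced graph stays in the same class) is in fact a slightly more careful rendering of the paper's argument.
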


\begin{proof}
We proceed by induction on the number of rooms. Assume $\mathrm{V} = \{A_1, \dots, A_n\}$, and Lemma~\ref{LeDi} for $\bar{\mathrm{G}} = (\mathrm{V},\mathrm{E},\mathtt{d})$. Then, consider the extension $\bar{\mathrm{G}}' = (\mathrm{V}',\mathrm{E}',\mathtt{d}')$ of $\bar{\mathrm{G}}$ such that $\mathrm{V}' = \mathrm{V} \sqcup \{B\}$, $\mathrm{E}' = \mathrm{E} \sqcup \big\{(A_n,B), (B,A_n)$, and $\mathtt{d}'(A_i,A_j) = \mathtt{d}(A_i,A_j)$ for $i,j \in [n]$. Hence,
\begin{align*}
\det \mathsf{D}_{\bar{\mathrm{G}}'} & = 
\begin{vmatrix} 1 & \mathtt{d}(A_2,A_1) & \cdots & \mathtt{d}(A_n,A_1) & \mathtt{d}'(B,A_n)\, \mathtt{d}(A_n,A_1) \\
\mathtt{d}(A_1,A_2) & 1 & \cdots & \mathtt{d}(A_n,A_2) & \mathtt{d}'(B,A_n)\, \mathtt{d}(A_n,A_2) \\
\vdots & \vdots & \ddots & \vdots & \vdots \\
\mathtt{d}(A_1,A_n) & \mathtt{d}(A_2,A_n) & \cdots & 1 & \mathtt{d}'(B,A_n) \\
\mathtt{d}'(A_1,B) & \mathtt{d}'(A_2,B) & \cdots & \mathtt{d}'(A_n,B) & 1
\end{vmatrix} \\
& = \begin{vmatrix} 1 & \mathtt{d}(A_2,A_1) & \cdots & \mathtt{d}(A_n,A_1) & 0 \\
\mathtt{d}(A_1,A_2) & 1 & \cdots & \mathtt{d}(A_n,A_2) & 0 \\
\vdots & \vdots & \ddots & \vdots & \vdots \\
\mathtt{d}(A_1,A_n) & \mathtt{d}(A_2,A_n) & \cdots & 1 & 0 \\
\mathtt{d}'(A_1,B) & \mathtt{d}'(A_2,B) & \cdots & \mathtt{d}'(A_n,B) & 1 - \mathtt{d}'(B,A_n) \, \mathtt{d}'(A_n,B)
\end{vmatrix} \\
& = \det \mathsf{D}_{\bar{\mathrm{G}}} \times \big(1 - \mathtt{d}'(B,A_n) \, \mathtt{d}'(A_n,B)\big). 
\end{align*}
\end{proof}

\begin{example}
The determinant of the matrix associated to the exponential distance graph of a walking cat represented in Figure~\ref{ExDi} is	
$$\begin{vmatrix} 1 & a^- & a^- b^+ & a^- c^+ & a^- d^+ & a^- d^+ e^+ \\
a^+ & 1 & b^+ & c^+ & d^+ & d^+ e^+ \\ a^+ b^- & b^- & 1 & b^- c^+ & b^- d^+ & b^- d^+ e^+ \\
a^+ c^- & c^- & b^+ c^- & 1 & c^- d^+ & c^- d^+ e^+ \\ a^+ d^- & d^- & b^+ d^- & c^+ d^- & 1 & e^+ \\
a^+ d^- e^- & d^- e^- & b^+ d^- e^- & c^+ d^- e^- & e^- & 1
\end{vmatrix} = \begin{matrix}
(1 - a^+ a^-)\, (1 - b^+ b^-)\, (1 - c^+ c^-) \\ (1 - d^+ d^-)\ (1 - e^+ e^-).
\end{matrix}$$
	
\begin{figure}[h]
	\centering
	\begin{tikzpicture}[->,>=stealth',shorten >=1pt,auto,node distance=2.5cm,
	thick,main node/.style={circle,draw,font=\sffamily\bfseries}]
	\node[main node] (1) {1};
	\node[main node] (2) [right of=1] {2};
	\node[main node] (3) [right of=2] {3};
	\node[main node] (4) [above of=2] {4};
	\node[main node] (5) [below of=2] {5};
	\node[main node] (6) [below of=5] {6};
	
	\path[every node/.style={font=\sffamily}]
	(1) edge [bend right] node [below] {$a^+$} (2)
	edge [loop left] node {1} (1)	
	(2) edge [bend right] node [above] {$a^-$} (1)
	edge [bend right] node [below] {$b^-$} (3)
	edge [bend right] node [right] {$c^-$} (4)
	edge [bend right] node [left] {$d^-$} (5)
	edge [loop above] node {1} (2)
	(3) edge [bend right] node [above] {$b^+$} (2)
	edge [loop right] node {1} (3)
	(4) edge [bend right] node [left] {$c^+$} (2)
    edge [loop above] node {1} (4)	
	(5) edge [bend right] node [right] {$d^+$} (2)
	edge [bend right] node [left] {$e^-$} (6)
	edge [loop above] node {1} (5)
	(6) edge [bend right] node [right] {$e^+$} (5)
	edge [loop below] node {1} (6);
	\end{tikzpicture}
	\caption{An Exponential Distance Graph}
	\label{ExDi}
\end{figure}
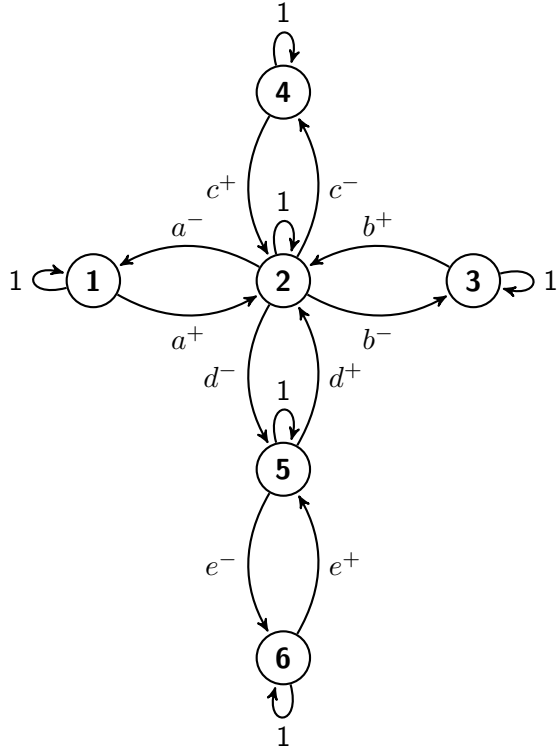	
\end{example}

\begin{proposition} 
Let $\bar{\mathrm{G}} = (\mathrm{V},\mathrm{E},\mathtt{d})$ be an exponential distance graph of a walking cat, and $\mathrm{U}_1, \dots, \mathrm{U}_r$ nonempty subsets of $\mathrm{V}$ connected by a corridor and partitioning $\mathrm{V}$ into $s$ sets $\mathrm{V}_1, \dots, \mathrm{V}_s$ as in Lemma~\ref{LeCo}. Besides, assume that
\begin{itemize}
\item for every $i \in [r]$, there is a real variable $q_i$ such that, if $A,B \in \mathrm{U}_i$, we have $\mathtt{d}(A,B) = q_i$,
\item for every $k \in [s]$, the induced subgraph $\bar{\mathrm{G}}_k = (\mathrm{V}_k,\mathrm{E}_k, \mathtt{d})$ is an indirectly acyclic digraph.
\end{itemize}	
Then, $$\det \mathsf{D}_{\bar{\mathrm{G}}} = \prod_{i \in [r]} \big(1 + (\#\mathrm{U}_i -1)q_i\big)(1-q_i)^{\#\mathrm{U}_i-1} \prod_{k \in [s]} \prod_{\{A,B\} \in \mathrm{u}(\mathrm{E}_k)} \big(1 - \mathtt{d}(A,B)\, \mathtt{d}(B,A)\big).$$
\end{proposition}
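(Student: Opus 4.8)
The plan is to stack the two factorizations already established, since the two hypotheses of the proposition are precisely what is needed to invoke Theorem~\ref{Th2} and Lemma~\ref{LeDi} in turn. The first hypothesis is exactly the assumption of Theorem~\ref{Th2}, while the second hypothesis is exactly what allows Lemma~\ref{LeDi} to be applied to each block of the partition.

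First I would apply Theorem~\ref{Th2} directly. Since $\mathrm{U}_1, \dots, \mathrm{U}_r$ are connected by a corridor and partition $\mathrm{V}$ into $\mathrm{V}_1, \dots, \mathrm{V}_s$ as in Lemma~\ref{LeCo}, and since $\mathtt{d}(A,B) = q_i$ whenever $A,B \in \mathrm{U}_i$, Theorem~\ref{Th2} yields
$$\det \mathsf{D}_{\bar{\mathrm{G}}} = \prod_{i \in [r]} \big(1 + (\#\mathrm{U}_i -1)q_i\big)(1-q_i)^{\#\mathrm{U}_i-1} \prod_{k \in [s]} \det \mathsf{D}_{\bar{\mathrm{G}}_k}.$$
This isolates the corridor contribution as the first product and reduces the problem to evaluating the determinant of each induced block $\bar{\mathrm{G}}_k$ separately.

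Next I would treat each factor $\det \mathsf{D}_{\bar{\mathrm{G}}_k}$ on its own. By the second hypothesis, every induced subgraph $\bar{\mathrm{G}}_k = (\mathrm{V}_k, \mathrm{E}_k, \mathtt{d})$ is an indirectly acyclic exponential distance graph of a walking cat, so Lemma~\ref{LeDi} applies and gives
$$\det \mathsf{D}_{\bar{\mathrm{G}}_k} = \prod_{\{A,B\} \in \mathrm{u}(\mathrm{E}_k)} \big(1 - \mathtt{d}(A,B)\, \mathtt{d}(B,A)\big).$$
Substituting this into the product over $k \in [s]$ immediately produces the claimed formula.

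The only point requiring care --- and the closest thing to an obstacle --- is confirming that each induced block $\bar{\mathrm{G}}_k$ genuinely is an exponential distance graph of a walking cat, so that Lemma~\ref{LeDi} may legitimately be invoked. Here the second bullet of Lemma~\ref{LeCo} is decisive: it guarantees that every minimal sequence between two rooms of $\mathrm{V}_k$ stays inside $\mathrm{V}_k$, which means the restriction of $\mathtt{d}$ to $\mathrm{V}_k^2$ still satisfies the multiplicativity and multiset axioms defining an exponential distance. Combined with the connectedness of $\mathrm{G}_k$ noted after Lemma~\ref{LeCo}, this verifies the hypotheses of Lemma~\ref{LeDi} on each block, and the two displayed equations then compose to give the result.
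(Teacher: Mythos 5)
Your proof is correct and follows exactly the route the paper takes: its entire proof reads ``Use Theorem~\ref{Th2} and Lemma~\ref{LeDi},'' which is precisely your two-step composition. Your additional care in checking that each induced block $\bar{\mathrm{G}}_k$ remains a legitimate exponential distance graph (via the second bullet of Lemma~\ref{LeCo}) is a detail the paper leaves implicit, but it does not change the argument.
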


\begin{proof}
Use Theorem~\ref{Th2} and Lemma~\ref{LeDi}.
\end{proof}

\paragraph{Hyperplane Arrangement.} To every hyperplane $H$ in $\mathbb{R}^n$ can be associated two connected open half-spaces $H^+$ and $H^-$ such that $H^+ \sqcup H^0 \sqcup H^- = \mathbb{R}^n$ and $\overline{H^+} \cap \overline{H^-} = H^0$, letting $H^0 := H$. A face of a hyperplane arrangement $\mathcal{A}$ is a nonempty subset of $\mathbb{R}^n$ having the form $\displaystyle F := \bigcap_{H \in \mathcal{A}} H^{\epsilon_H(F)}$ with $\epsilon_H(F) \in \{+,0,-\}$. Denote the set formed by the faces of $\mathcal{A}$ by $F_{\mathcal{A}}$. A chamber of $\mathcal{A}$ is a face $F \in F_{\mathcal{A}}$ such that $\epsilon_H(F) \neq 0$ for every $H \in \mathcal{A}$. Denote the set formed by the chambers of $\mathcal{A}$ by $C_{\mathcal{A}}$. For $A,B \in C_{\mathcal{A}}$, the set of half-spaces containing $A$ but not $B$ is $\mathscr{H}(A,B) := \big\{H^{\epsilon_H(A)}\ \big|\ H \in \mathcal{A},\, \epsilon_H(A)=- \epsilon_H(B)\big\}$. Assign a variable $h_H^{\varepsilon}$ to every half-space $H^{\varepsilon}$, and define the polynomial ring $R_{\mathcal{A}} := \mathbb{R}\big[h_H^{\varepsilon}\ \big|\ \varepsilon \in \{+,-\},\, H \in \mathcal{A}\big]$. The exponential distance $\mathrm{v}: C_{\mathcal{A}} \times C_{\mathcal{A}} \rightarrow R_{\mathcal{A}}$ of Aguiar and Mahajan is \cite[§~8.1]{AgMa} $$\mathrm{v}(A,A) = 1 \quad \text{and} \quad \mathrm{v}(A,B) = \prod_{H^{\varepsilon} \in \mathscr{H}(A,B)} h_H^{\varepsilon}\,\ \text{if}\,\ A \neq B.$$

\noindent The centralization to a face $F \in F_{\mathcal{A}} \setminus C_{\mathcal{A}}$ is defined by $\mathcal{A}_F := \{H \in \mathcal{A}\ |\ F \subseteq H\}$, its weight  
$\displaystyle \mathrm{b}_F := \prod_{H \in \mathcal{A}_F} h_H^+ \, h_H^-$, and its multiplicity $\displaystyle \beta_F := \frac{\#\{C \in C_{\mathcal{A}}\ |\ \overline{C} \cap H = F\}}{2}$ which is independent of the chosen $H \in \mathcal{A}_F$ as can be seen in \cite[Theorem~5.7]{Ra}.

\begin{proposition} 
Let $\bar{\mathrm{G}} = (\mathrm{V},\mathrm{E},\mathtt{d})$ be an exponential distance graph of a walking cat, and $\mathrm{U}_1, \dots, \mathrm{U}_r$ nonempty subsets of $\mathrm{V}$ connected by a corridor and partitioning $\mathrm{V}$ into $s$ sets $\mathrm{V}_1, \dots, \mathrm{V}_s$ as in Lemma~\ref{LeCo}. Besides, assume that
\begin{itemize}
	\item for every $i \in [r]$, there is a real variable $q_i$ such that, if $A,B \in \mathrm{U}_i$, we have $\mathtt{d}(A,B) = q_i$,
	\item for every $k \in [s]$, there exists a hyperplane arrangement $\mathcal{A}_k$ such that, if $\bar{\mathrm{G}}_k = (\mathrm{V}_k,\mathrm{E}_k, \mathtt{d})$ is the subgraph induced by $\mathrm{V}_k$, then $\mathrm{V}_k = C_{\mathcal{A}_k}$, $\mathrm{E}_k = \big\{(A,B) \in C_{\mathcal{A}_k}^2\ \big|\ \#\mathscr{H}(A,B)=1\big\}$, and $\mathtt{d}(A,B) = \mathrm{v}(A,B)$ for $A,B \in \mathrm{V}_k$.
\end{itemize}	
We have $$\det \mathsf{D}_{\bar{\mathrm{G}}} = \prod_{i \in [r]} \big(1 + (\#\mathrm{U}_i -1)q_i\big)(1-q_i)^{\#\mathrm{U}_i-1} \prod_{k \in [s]} \prod_{F \in F_{\mathcal{A}_k} \setminus C_{\mathcal{A}_k}} (1 - \mathrm{b}_F)^{\beta_F}.$$
\end{proposition}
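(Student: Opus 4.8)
The plan is to separate the two structural ingredients of the statement: the corridor decomposition, which is already handled by Theorem~\ref{Th2}, and the evaluation of each diagonal block, which is a Varchenko-type determinant. First I would observe that the first hypothesis of the Proposition (a common value $q_i$ on each $\mathrm{U}_i$) together with the corridor assumption is exactly the hypothesis of Theorem~\ref{Th2}. Applying that theorem directly yields
\[
\det \mathsf{D}_{\bar{\mathrm{G}}} = \prod_{i \in [r]} \big(1 + (\#\mathrm{U}_i -1)q_i\big)(1-q_i)^{\#\mathrm{U}_i-1} \prod_{k \in [s]} \det \mathsf{D}_{\bar{\mathrm{G}}_k},
\]
so the corridor factor already matches the claimed formula and it remains only to evaluate each block determinant $\det \mathsf{D}_{\bar{\mathrm{G}}_k}$ in terms of the arrangement $\mathcal{A}_k$.

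Next I would identify $\mathsf{D}_{\bar{\mathrm{G}}_k}$ with the Varchenko matrix of $\mathcal{A}_k$. By the second hypothesis, $\mathrm{V}_k = C_{\mathcal{A}_k}$ and $\mathtt{d} = \mathrm{v}$ on $\mathrm{V}_k$, so $\mathsf{D}_{\bar{\mathrm{G}}_k} = \big(\mathrm{v}(B,A)\big)_{A,B \in C_{\mathcal{A}_k}}$. Here I would check the compatibility point: the edge set $\mathrm{E}_k = \big\{(A,B)\ \big|\ \#\mathscr{H}(A,B)=1\big\}$ records adjacency of chambers, a minimal sequence from $A$ to $B$ in $\bar{\mathrm{G}}_k$ is a minimal gallery crossing the hyperplanes that separate $A$ from $B$ exactly once each, and the exponential-distance axioms then force $\mathrm{v}(A,B) = \prod_{H^{\varepsilon} \in \mathscr{H}(A,B)} h_H^{\varepsilon}$, in agreement with the definition of Aguiar and Mahajan. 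Thus $\mathsf{D}_{\bar{\mathrm{G}}_k}$ is exactly the Varchenko matrix of $\mathcal{A}_k$ in the refined form where the two half-spaces of each hyperplane carry distinct variables $h_H^+$ and $h_H^-$.

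Finally I would invoke the Varchenko determinant formula in this refined form, as stated in \cite[\S~8.1]{AgMa}, to obtain
\[
\det \mathsf{D}_{\bar{\mathrm{G}}_k} = \prod_{F \in F_{\mathcal{A}_k} \setminus C_{\mathcal{A}_k}} (1 - \mathrm{b}_F)^{\beta_F}.
\]
The factor attached to a non-chamber face $F$ is $1 - \mathrm{b}_F$ with $\mathrm{b}_F = \prod_{H \in \mathcal{A}_F} h_H^+ h_H^-$, which is precisely the quantity $\mathrm{v}(A,B)\,\mathrm{v}(B,A)$ accumulated over the hyperplanes through $F$, and its exponent $\beta_F$ is the multiplicity, well defined by \cite[Theorem~5.7]{Ra}. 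Substituting this into the displayed product over $k$ gives the asserted identity.

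The main obstacle is the Varchenko determinant formula itself: reducing to it is immediate once the identification of the block matrix is made, but a self-contained proof of that formula is substantial, requiring an induction over the face poset of $\mathcal{A}_k$ (in the style of deletion--restriction, or a triangularization adapted to the separating-hyperplane order). I would therefore treat it as the known input and concentrate the verification on the two genuinely new points: that the corridor hypothesis matches Theorem~\ref{Th2} verbatim, and that the digraph $\bar{\mathrm{G}}_k$ together with its edge set and exponential distance reproduces the chamber geometry of $\mathcal{A}_k$, so that $\mathsf{D}_{\bar{\mathrm{G}}_k}$ really is the Varchenko matrix.
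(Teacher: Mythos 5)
Your proof is correct and follows essentially the same route as the paper, which simply combines Theorem~\ref{Th2} with the known Varchenko-type determinant formula (the paper cites \cite[Corollary~1.4]{Ra} for this, while you attribute it to \cite[\S~8.1]{AgMa}, but it is the same input). Your additional verification that $\mathsf{D}_{\bar{\mathrm{G}}_k}$ really is the Varchenko matrix of $\mathcal{A}_k$ is a welcome detail the paper leaves implicit.
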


\begin{proof}
Use Theorem~\ref{Th2} and \cite[Corollary~1.4]{Ra}.
\end{proof}

\begin{example}
Consider the exponential distance graph with induced subgraphs and entrance rooms respectively represented by the hyperplane arrangements $\mathcal{A}_1, \mathcal{A}_2$ and the set $\{C_1, C_2\}$ in Figure~\ref{ex}. In order to have a determinant calculable with SageMath, we assume that, for $i \in [3]$, $h_{H_i}^+ = h_{H_i}^- = h_1$, and $h_{H}^+ = h_{H}^- = h_2$. Moreover, set $\mathtt{d}(C_1,C_2) = \mathtt{d}(C_2,C_1) = q$. The determinant of the matrix associated to that exponential distance graph is $$\begin{vmatrix}
	1 & h_1^2 & h_1 & h_1^2 & h_1 & h_1 & h_1^2 & q & q h_2 \\
	h_1^2 & 1 & h_1 & h_1^2 & h_1 & h_1^3 & h_1^2 & h_1^2 q & h_1^2 q h_2 \\
	h_1 & h_1 & 1 & h_1 & h_1^2 & h_1^2 & h_1^3 & h_1 q & h_1 q h_2 \\
	h_1^2 & h_1^2 & h_1 & 1 & h_1^3 & h_1 & h_1^2 & h_1^2 q & h_1^2 q h_2 \\
	h_1 & h_1 & h_1^2 & h_1^3 & 1 & h_1^2 & h_1 & h_1 q & h_1 q h_2 \\
	h_1 & h_1^3 & h_1^2 & h_1 & h_1^2 & 1 & h_1 & h_1 q & h_1 q h_2 \\
	h_1^2 & h_1^2 & h_1^3 & h_1^2 & h_1 & h_1 & 1 & h_1^2 q & h_1^2 q h_2 \\
	q & h_1^2 q & h_1 q & h_1^2 q & h_1 q & h_1 q & h_1^2 q & 1 & h_2 \\
	q h_2 & h_1^2 q h_2 & h_1 q h_2 & h_1^2 q h_2 & h_1 q h_2 & h_1 q h_2 & h_1^2 q h_2 & h_2 & 1
	\end{vmatrix} = (1-q^2) (1- h_1^2)^9 (1- h_2^2).$$
	
	\begin{figure}[h]
		\centering
		\includegraphics[scale=0.7]{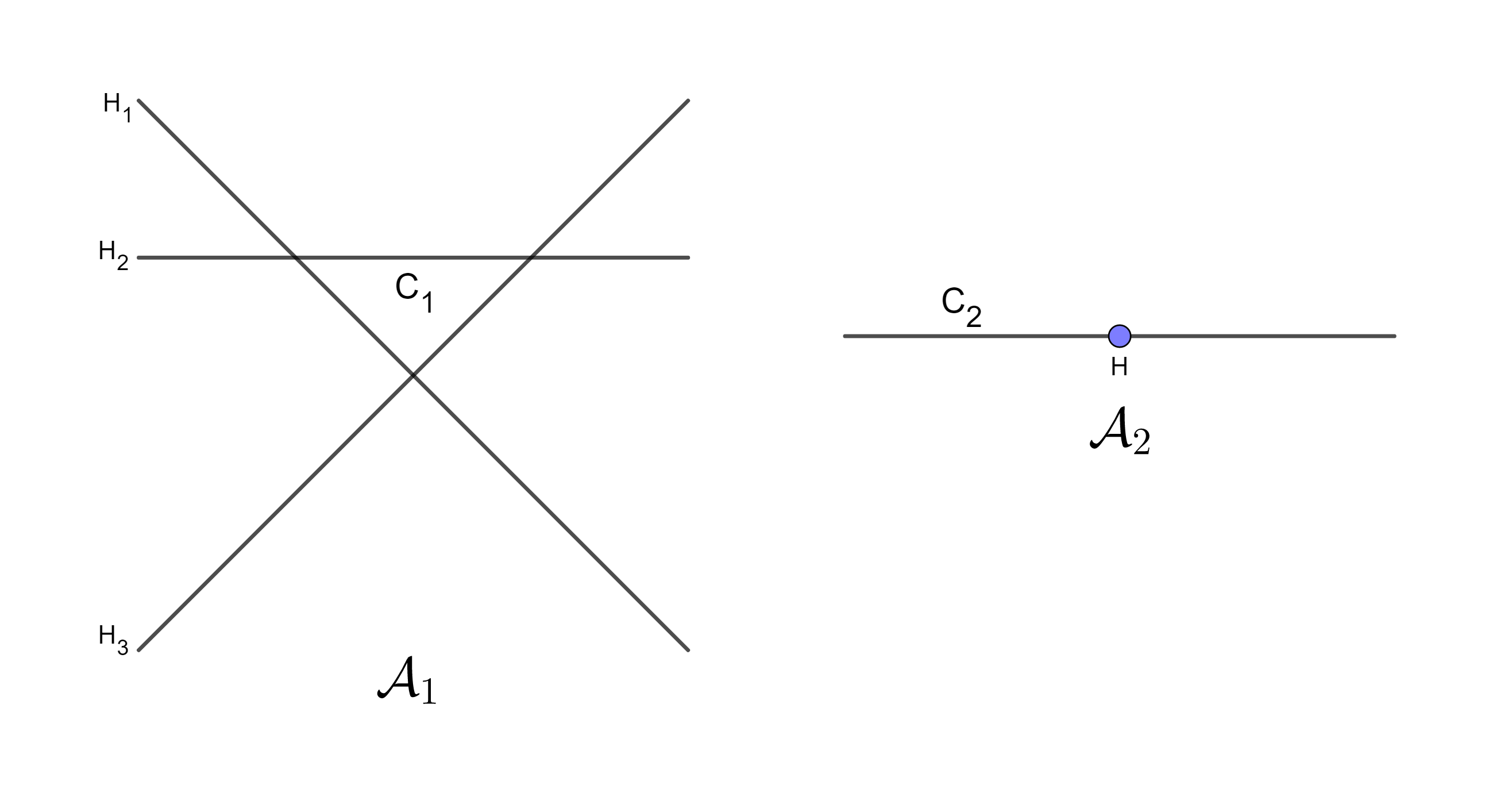}
		\caption{Hyperplane arrangements and Entrance Rooms}
		\label{ex}
	\end{figure}
\end{example}

\bibliographystyle{abbrvnat}

\end{document}